\theoremstyle{definition}
\newtheorem{assumption}{Assumption}
\newtheorem{remark}{Remark}
\newtheorem{theorem}{Theorem}
\newtheorem{example}{Example}
\newtheorem{definition}{Definition}
\newtheorem{corollary}{Corollary}
\newtheorem{proposition}{Proposition}
\newcommand\indep{\protect\mathpalette{\protect\independenT}{\perp}}
\def\independenT#1#2{\mathrel{\rlap{$#1#2$}\mkern2mu{#1#2}}}
\newcommand{\ATE}{\tau^\textup{ATE}}
\newcommand{\var}{\mathrm{var}}
\newcommand{\Var}{\mathrm{var}}
\def\b1{\boldsymbol{1}}
\def\d{\text{d}}
\newcommand{\mc}{\mathcal}
\newcommand{\E}{\mathrm{E}}
\definecolor{RED}{RGB}{255,0,0}
\begin{document}

\title{{\huge Overlap in observational studies\\with high-dimensional covariates}\thanks{We thank Xiaohong Chen, Skip Hirshberg, Elie Tamer, participants at the Atlantic Causal Inference Conference, Stanford University, and Yale University for helpful comments and discussions. PD thanks the National Science Foundation, Grant DMS 1713152. JS thanks Office of Naval Research (ONR) Grants N00014-17-1-2176 and N00014-15-1-2367.}\\[1em] }

\author[a]{Alexander D'Amour\footnote{Corresponding author. Alexander D'Amour is now a Research Scientist at Google Research, Cambridge, MA USA; this work was completed while he was a Neyman Visiting Assistant Professor at UC Berkeley.}}
\author[a]{Peng Ding}
\author[a]{Avi Feller}
\author[a]{Lihua Lei\footnote{Lihua Lei is now a postdoctoral fellow in the Department of Statistics at Stanford University, Stanford, CA USA. This work was completed while he was a graduate student in the Department of Statistics at UC Berkeley.}}
\author[a]{Jasjeet Sekhon}

\affil[a]{UC Berkeley Department of Statistics, Evans Hall, Berkeley, CA USA}
\affil[ ]{\tt alexdamour@google.com, pengdingpku@berkeley.edu, afeller@berkeley.edu, lihualei@stanford.edu, sekhon@berkeley.edu}

\date{}

\maketitle

\pagenumbering{gobble}

\begin{abstract}
\singlespacing
\noindent Estimating causal effects under exogeneity hinges on two key assumptions: unconfoundedness and overlap. Researchers often argue that unconfoundedness is more plausible when more covariates are included in the analysis. Less discussed is the fact that covariate overlap is more difficult to satisfy in this setting. In this paper, we explore the implications of overlap in observational studies with high-dimensional covariates and formalize curse-of-dimensionality argument, suggesting that these assumptions are stronger than investigators likely realize. Our key innovation is to explore how strict overlap restricts global discrepancies between the covariate distributions in the treated and control populations. Exploiting results from information theory,  we derive explicit bounds on the average imbalance in covariate means under strict overlap and show that these bounds become more restrictive as the dimension grows large. We discuss how these implications interact with assumptions and procedures commonly deployed in observational causal inference, including sparsity and trimming.
\end{abstract}

\vspace{2em}
\begin{small}
\noindent {\it Key words}: Causal inference; Overlap; Information theory; Curse of dimensionality
\end{small}

\clearpage
\pagenumbering{arabic}
\onehalfspacing

\section{Introduction}

Accompanying the rapid growth in administrative databases and online platforms, there has been a push to extend methods for estimating causal effects under exogeneity to settings with high-dimensional covariates \citep{Belloni2014, Farrell2015, Athey2016}.
These studies typically require a pair of identifying assumptions \citep{Rosenbaum1983, imbens2004nonparametric}: \emph{unconfoundedness}, also known as selection on observables, in which the treatment assignment mechanism depends only on observed covariates; and \emph{overlap}, also known as positivity or common support, in which all units have a non-zero probability of assignment to each treatment condition.

A key argument for high-dimensional observational studies is that unconfoundedness is more plausible when the analyst adjusts for more covariates \citep{Rosenbaum2002,Rubin2009}.
Setting aside notable counter-examples to this argument~\citep{Pearl2011, wooldridge2016zbias}, %
the intuition is straightforward to state: the richer the set of covariates, the more likely that unmeasured confounding variables become measured confounding variables. 
This intuition, however, has the opposite implications for overlap: the
richer the set of covariates, the closer these covariates come to perfectly predicting treatment assignment for at least some subgroups.

We formalize this curse of dimensionality argument and demonstrate that there are strong implications of overlap when there are many covariates.
In particular, we focus on the strict overlap assumption,
which asserts that the propensity score is bounded away from zero and one with probability one.
While this appears to be a local constraint,
we show that strict overlap implies global restrictions on the discrepancy between the covariate distributions in the treated and control populations.
To do so, we re-frame strict overlap as bounding a likelihood ratio, which is a well-studied problem in information theory \citep{Hellman1970}.
Adapting results from \citet{Rukhin1997}, we derive explicit bounds on various types of covariate imbalance, and show that these bounds become more restrictive as the dimension of the covariates grows.
For example, we show that as the dimension of the covariates grows, strict overlap implies that the covariates must either be highly correlated, or that their means must become arbitrarily close to balance on average.
To put these results into context, we discuss how the implications of strict overlap intersect with common modeling assumptions, and how our results inform the common practice of trimming in high-dimensional contexts.

We contribute to a growing literature on the critical role of overlap in observational settings.
In the context of semiparametric estimators, several papers show that the convergence rate critically depends on the level of overlap \citep{Khan2010, hong2018inference, ma2018robust}; see \citet{busso2014new} for relevant simulation evidence.
Recognizing this, one common approach is to trim units that have extreme values of the propensity score \citep{dehejia1999causal, Crump2009, Petersen2012, yang2018asymptotic}.
An alternative is to instead propose estimators and inference methods that have additional robustness to overlap violations \citep{chen2008semiparametric, VanderLaan2011, chaudhuri2014heavy, rothe2017robust, armstrong2017finite, sasaki2017inference}.
Finally, our results are especially relevant for recent efforts to incorporate machine learning into estimating causal effects, partly to exploit rich covariates \citep[see][for recent reviews]{Chernozhukov2016, athey2019mlreview}.
On the one hand, by using machine learning to perform covariate adjustment, these methods can achieve parametric convergence rates under extremely weak nonparametric modeling assumptions.
On the other hand, the cost of this nonparametric flexibility is that these methods are highly sensitive to poor overlap. 
Thus, understanding the implications of overlap with high-dimensional covariates is therefore critical across many open research areas.

The paper proceeds as follows. 
Section \ref{sec:framework} sets up the problem and defines key notation.
Section \ref{sec:implications_main} gives the main results on implications of strict overlap.
Section \ref{sec:outcome_models_main} discusses the role of assumptions on the outcome model, such as sparsity, as well as trimming.
Section \ref{sec:future work} offers some discussion.
In separate work, we address possible remedies and methodologies for assessing overlap in this setting, but believe that characterizing the implications of overlap remains of independent interest.

\section{Preliminaries}
\label{sec:framework}

We focus on an observational study with a binary treatment.
For each sampled unit $i$, $(Y_i(0), Y_i(1))$ are potential outcomes, $T_i$ is the treatment indicator, and $X_i$ is the set of covariates.
Let $\{(Y_i(0), Y_i(1)), T_i, X_i\}_{i=1}^n$ be independently and identically distributed according to a superpopulation probability measure $P$. We drop the $i$ subscript when discussing population stochastic properties of these quantities.
We observe triples $(Y^\textup{obs}, T, X)$ where $Y^\textup{obs} = (1-T)Y(0) + TY(1)$.
We would like to estimate the average treatment effect
$$
\tau^\textup{ATE} = \E\{ Y(1) - Y(0) \},
$$
\noindent though our results immediately extend to other estimands like the Average Treatment Effect on the Treated.

The standard approach in observational studies is to argue that identification is plausible conditional on a possibly large set of covariates \citep{Rosenbaum1983, imbens2004nonparametric}.
Specifically, the investigator chooses a set of $p$ covariates $X_{1:p} \subset X$, and assumes unconfoundedness.
\begin{assumption}[Unconfoundedness]
\label{assn:unconfounded}
$
(Y(0), Y(1)) \indep T \mid X_{1:p}.
$
\end{assumption}

Assumption \ref{assn:unconfounded} ensures
\begin{align}
\tau^{\textup{ATE}} &= \E\big[\E\{ Y(1) \mid X_{1:p}\} - \E\{Y(0) \mid X_{1:p}\} \big]\nonumber\\
&= \E\big[\E\{ Y^\textup{obs} \mid T=1, X_{1:p}\} - \E\{Y^\textup{obs} \mid T=0, X_{1:p}\}\big].\label{eq:ident}
\end{align}

Importantly, the conditional expectations in \eqref{eq:ident} are non-parametrically identifiable only if the following population overlap assumption is satisfied. Let $e(X_{1:p}) = P(T = 1 \mid X_{1:p})$ be the propensity score. 
\begin{assumption}[Population overlap]
\label{assn:overlap}
$
0 < e(X_{1:p}) < 1 
$
with probability 1.
\end{assumption}

Assumption~\ref{assn:overlap} is sufficient for non-parametric identification of $\ATE$, but is not sufficient for efficient semiparametric estimation of $\ATE$, a fact we discuss in further detail in the next section.
For this reason, investigators typically invoke a stronger variant of Assumption \ref{assn:overlap} \citep[e.g.,][]{hirano2003efficient, Khan2010}, which we call the strict overlap assumption with bound $\eta$.

\begin{assumption}[Strict overlap]
\label{assn:strict overlap}
For some constant $\eta \in (0, 0.5)$,
$
\eta \leq e(X_{1:p}) \leq 1-\eta  
$
with probability 1.
\end{assumption}

Strict overlap is integral across a range of settings. 
Without any restrictions on the outcome distribution, strict overlap is a necessary condition for the existence of regular semiparametric estimators of $\ATE$ that are uniformly $n^{1/2}$-consistent over a nonparametric model family \citep{Khan2010}. This necessity may not hold if other conditions, e.g., conditional moment conditions and smoothness conditions, are imposed on the potential outcomes \citep[e.g.,][]{chen2008semiparametric, hirshberg2017augmented, ma2018robust}. Technically, we can relax Assumption \ref{assn:strict overlap}, but this will involve non-standard asymptotic analyses \citep[e.g.,][]{hong2018inference, ma2018robust} and it is difficult, if not impossible, to conduct uniform inference on $\ATE$ \citep[e.g.][]{khan2013uniform}.
Nevertheless, a large body of literature assumes strict overlap, even in the presence of outcome restrictions, as it facilitates theoretical analysis; see, for example, \citet{VanderLaan2011} and \citet{Chernozhukov2016}. Moreover, as \citet[][Section 4.1]{khan2013uniform} observe, it is not clear how to conduct uniform inference without strict overlap conditions, except in corner cases.
Indeed, \citet{khan2013uniform} prove that neither bootstrap inference nor pivotal inference is asymptotically valid without this assumption. \citet{ma2018robust} shed some light on the possibility of uniform inference under assumptions on tail behaviors of inverse propensity scores though they do not provide a complete recipe. In general, a lack of uniform inference is problematic in practice, even if we can characterize the limiting behavior for every data generating distribution in a model, because the correct choice of inferential procedure will depend on the unknown truth. See, e.g., \citet{romano1999subsampling}, \citet{andrews2012estimation, andrews2013maximum}, and \citet{chenlikelihood2011} for discussion in other contexts.

\section{Implications of Strict Overlap}
\label{sec:implications_main}

\subsection{Framework}
In this section, we show that strict overlap restricts the overall discrepancy between the treated and control covariate measures, and that this restriction becomes more binding as the dimension $p$ increases.
Formally, we write the control and treatment measures for covariates, for all $p$, as:
\begin{align*}
P_0(X_{1:p} \in A) &:= P(X_{1:p} \in A \mid T=0),\\
P_1(X_{1:p} \in A) &:= P(X_{1:p} \in A \mid T=1).
\end{align*}
For the remainder of the paper, we will assume that the marginal probability that any unit is assigned to treatment, $\pi := P(T = 1)$, is bounded by $\eta \leq \pi \leq 1-\eta$. 
With a slight abuse of notation, we define the marginal probability measure on covariates, implied by the superpopulation distribution, as $
P = \pi P_1 + (1-\pi) P_0,
$
a mixture of the condition-specific probability measures $P_0$ and $P_1$.

We write the densities of $P_1$ and $P_0$ with respect to the dominating measure $P$ as $\d P _1/\d P $ and $\d P _0/\d P $.
We write the marginal probability measures of finite-dimensional covariate sets $X_{1:p}$ as $P_0(X_{1:p})$ and $P_1(X_{1:p})$, and the marginal densities as $\d P _1/\d P (X_{1:p})$ and $\d P _0/\d P (X_{1:p})$.
When discussing density ratios, we will omit the dominating measure $\d P $.

By Bayes' Theorem, Assumption~\ref{assn:strict overlap} is equivalent to the following bound on the density ratio between $P_1$ and $P_0$, which we will refer to as a likelihood ratio:
\begin{align}
b_{\min}  \leq \frac{\d P _1(X_{1:p})}{\d P _0(X_{1:p})} \leq  
 b_{\max} ,\label{eq:bmin bmax}
\end{align}
where 
\begin{align}\label{eq:bmin-bmax-def}
b_{\min} := \frac{1-\pi}{\pi} \frac{\eta}{1-\eta},\qquad
b_{\max} := \frac{1-\pi}{\pi} \frac{1-\eta}{\eta} . 
\end{align}

Implications of bounded likelihood ratios are well-studied in information theory \citep{Hellman1970,Rukhin1993,Rukhin1997}.
Each of the results that follow are applications of a theorem due to \citet{Rukhin1997}, which relates likelihood ratio bounds of the form \eqref{eq:bmin bmax}
to upper bounds on certain divergences measuring the discrepancy between the distributions $P_0(X_{1:p})$ and $P_1(X_{1:p})$.
We include an adaptation of Rukhin's theorem in the appendix, as Theorem~\ref{thm:f bound}. 
We also derive additional implications of this result in the appendix.

In the subsequent, we explore the implications of Assumption~\ref{assn:strict overlap} when there are many covariates.
To do so, we set up an analytical framework in which the covariate sequence $X$ is a stochastic process $(X_{(k)})_{k > 0}$.
For any single problem, the investigator selects a finite set of covariates $X_{1:p}$ from the infinite pool of covariates $(X_{(k)})_{k > 0}$.
Importantly, this framework includes no notion of sample size because we are examining the population-level implications of an assumption about the population measure $P$.
Our results are independent of the number of samples that an investigator might draw from this population.

\begin{remark}[Strict Overlap and Gaussian Covariates]
    \label{rem:Gaussian}
  While we focus on the implications of strict overlap in high dimensions, this assumption also has surprising implications in low dimensions.
  For example, if $X$ is one-dimensional and follows a Gaussian distribution under both $P_0$ and $P_1$, strict overlap implies that $P_0 = P_1$, or that the covariate is perfectly balanced.
    This is because if $P_0 \neq P_1$, the log-density ratio $\log \d P _0/\d P _1(X)$ diverges for values of $X$ with large magnitude, implying that $e(X)$ can be arbitrarily close to 0 or 1 with positive probability.
    Similar results can be derived when $X_{1:p}$ is multi-dimensional Gaussian.
    Thus, for Gaussianly distributed covariates, the implications of strict overlap are so strong that they are uninteresting.
    For this reason, we do not give any examples of the implications of the strict overlap assumption when the covariates are Gaussian. 
\end{remark}

\subsection{Strict Overlap Implies Bounded Mean Discrepancy}

\label{sec:general mean discrepancy}
We now use these bounds to derive concrete implications of strict overlap.
Here, we show that strict overlap implies a strong restriction on the discrepancy between the means of $P_0(X_{1:p})$ and $P_1(X_{1:p})$. 
In particular, when $p$ is large, strict overlap implies that either the covariates are highly correlated under both $P_0$ and $P_1$, or the average discrepancy in means across covariates is small.

We represent the expectations and covariance matrices of $X_{1:p}$ under $P_0$ and $P_1$ as follows:
\begin{align*}
\mu_{0,1:p} := (\mu_{0,(1)}, \ldots, \mu_{0,(p)}) &:= \E_{P_0}(X_{1:p}) ,
&\Sigma_{0,1:p} &:= \var_{P_0}(X_{1:p}) , \\ 
\mu_{1,1:p} := (\mu_{1,(1)}, \ldots, \mu_{1,(p)}) &:= \E_{P_1}(X_{1:p}),
&\Sigma_{1,1:p} &:= \var_{P_1}(X_{1:p}).
\end{align*}
We use $\|\cdot\|$ to denote the Euclidean norm of a vector, and $\|\cdot\|_{\textup{op}}$ to denote the operator norm of a matrix.

\begin{theorem}
\label{thm:general mean discrepancy}
Assumption \ref{assn:strict overlap} implies
\begin{align}
\|\mu_{0,1:p}-\mu_{1,1:p}\| \leq
\min\Big\{&\|\Sigma_{0,1:p}\|^{1/2}_{\textup{op}} \cdot  B_{\chi^2(1\|0)}^{1/2},
\hspace{2mm}\|\Sigma_{1,1:p}\|^{1/2}_{\textup{op}} \cdot B_{\chi^2(0\|1)}^{1/2}\Big\},
\label{eq:general mean discrepancy}
\end{align}
where $b_{\min}$ and $b_{\max}$ are defined in \eqref{eq:bmin-bmax-def}, and 
$$
B_{\chi^2(1\|0)} = (1-b_{\min})(b_{\max}-1),\qquad 
B_{\chi^2(0\|1)} = (1-b_{\max}^{-1})(b_{\min}^{-1} -1)
$$ 
are free of $p$.
\end{theorem}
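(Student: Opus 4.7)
My strategy is two-layered. First, I convert the strict-overlap assumption into an upper bound on the chi-squared divergence between $P_0(X_{1:p})$ and $P_1(X_{1:p})$ using the $f$-divergence machinery the text alludes to (Theorem~\ref{thm:f bound}). Second, I turn that divergence bound into a bound on the Euclidean norm of the mean discrepancy through a Cauchy--Schwarz / dual-norm argument applied to linear test functions.

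\textbf{Step 1: bound the chi-squared divergence.} By \eqref{eq:bmin bmax}, the likelihood ratio $L := dP_1/dP_0(X_{1:p})$ satisfies $b_{\min} \le L \le b_{\max}$ $P_0$-almost surely, and $\E_{P_0}[L] = 1$ by construction. Applying Theorem~\ref{thm:f bound} with the convex function $f(t) = (t-1)^2$, whose $f$-divergence is Pearson's chi-squared, yields $\chi^2(P_1 \| P_0) \le (1-b_{\min})(b_{\max}-1) = B_{\chi^2(1\|0)}$. Repeating the argument after swapping the roles of $P_0$ and $P_1$, and noting that $dP_0/dP_1 \in [b_{\max}^{-1}, b_{\min}^{-1}]$, gives $\chi^2(P_0 \| P_1) \le B_{\chi^2(0\|1)}$. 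Morally this is a Bhatia--Davis variance bound for a mean-one random variable supported on a bounded interval.

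\textbf{Step 2: turn the divergence bound into mean imbalance.} For any scalar $g$ of $X_{1:p}$ that is square-integrable under $P_0$,
\[
\E_{P_1}[g] - \E_{P_0}[g] \;=\; \E_{P_0}\!\left[g\,(L-1)\right] \;=\; \cov_{P_0}(g, L),
\]
so Cauchy--Schwarz gives
\[
\bigl(\E_{P_1}[g] - \E_{P_0}[g]\bigr)^2 \;\le\; \var_{P_0}(g)\cdot\var_{P_0}(L) \;=\; \var_{P_0}(g)\cdot \chi^2(P_1 \| P_0).
\]
Specializing to linear test functions $g(x) = v^\top x$ with $v \in \mathbb{R}^p$ gives $\var_{P_0}(g) = v^\top \Sigma_{0,1:p} v \le \|\Sigma_{0,1:p}\|_{\textup{op}}\,\|v\|^2$ and $\E_{P_1}[g] - \E_{P_0}[g] = v^\top(\mu_{1,1:p} - \mu_{0,1:p})$. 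Taking the supremum over unit $v$ (equivalently, choosing $v \propto \mu_{1,1:p} - \mu_{0,1:p}$) then yields
\[
\|\mu_{1,1:p} - \mu_{0,1:p}\|^2 \;\le\; \|\Sigma_{0,1:p}\|_{\textup{op}}\cdot B_{\chi^2(1\|0)}.
\]
By symmetry, swapping $P_0$ and $P_1$ in Step 2 produces the companion bound $\|\mu_{1,1:p} - \mu_{0,1:p}\|^2 \le \|\Sigma_{1,1:p}\|_{\textup{op}}\cdot B_{\chi^2(0\|1)}$. Taking the minimum of the two and then a square root delivers \eqref{eq:general mean discrepancy}.

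\textbf{Main obstacle.} The Cauchy--Schwarz step is routine; it is essentially the standard change-of-measure inequality from the covariate-shift literature. The substantive content sits in Step 1: extracting a \emph{dimension-free} bound on $\chi^2(P_1\|P_0)$ from a pointwise likelihood-ratio constraint. This is where Rukhin's theorem (and, underneath it, the Bhatia--Davis two-point extremal distribution) is doing the real work; tightness there is what lets the constants $B_{\chi^2(1\|0)}$ and $B_{\chi^2(0\|1)}$ be free of $p$. Once that bound is secured, the dimension $p$ enters only implicitly through the operator norms $\|\Sigma_{j,1:p}\|_{\textup{op}}$.
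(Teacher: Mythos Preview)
Your proposal is correct and follows essentially the same route as the paper: first bound $\chi^2(P_1\|P_0)$ and $\chi^2(P_0\|P_1)$ via Rukhin's $f$-divergence theorem (the paper's Theorem~\ref{thm:f bound}), then use Cauchy--Schwarz on the change-of-measure identity $\E_{P_1}[g]-\E_{P_0}[g]=\E_{P_0}[g(L-1)]$ and specialize to the linear functional $g(x)=a^\top x$ with $a=(\mu_{1,1:p}-\mu_{0,1:p})/\|\mu_{1,1:p}-\mu_{0,1:p}\|$. Your framing of the second step as a covariance inequality ($\cov_{P_0}(g,L)$) and your remark that Step~1 is a Bhatia--Davis variance bound for a mean-one variable on $[b_{\min},b_{\max}]$ are nice touches, but the underlying argument is the same as the paper's.
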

The proof is included in the Appendix.
Theorem~\ref{thm:general mean discrepancy} has strong implications when $p$ is large.
These implications become apparent when we examine how much each covariate mean can differ, on average, under \eqref{eq:general mean discrepancy}.

\begin{corollary}
\label{cor:general mean MAD}
Assumption \ref{assn:strict overlap} implies
\begin{align}
 p^{-1} \sum_{i=1}^p \left|\mu_{0,(k)} - \mu_{1,(k)}\right|  \leq  p^{-1/2} \min\Big\{&\|\Sigma_{0,1:p}\|^{1/2}_{\textup{op}} \cdot  B_{\chi^2(1\|0)}^{1/2} ,\hspace{2mm}
\|\Sigma_{1,1:p}\|^{1/2}_{\textup{op}} \cdot  B_{\chi^2(0\|1)}^{1/2}\Big\}.
\label{eq:general mean MAD}
\end{align}
\end{corollary}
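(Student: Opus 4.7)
The plan is to derive the Corollary directly from Theorem~\ref{thm:general mean discrepancy} via a single application of the Cauchy--Schwarz inequality relating the $\ell^1$ and $\ell^2$ norms of the mean-difference vector in $\mathbb{R}^p$.

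Concretely, let $v = \mu_{0,1:p} - \mu_{1,1:p} \in \mathbb{R}^p$, so that the left-hand side of \eqref{eq:general mean MAD} is $\tfrac{1}{p}\|v\|_1$, while the Euclidean norm $\|v\|$ appearing in Theorem~\ref{thm:general mean discrepancy} is $\|v\|_2$. First I would invoke the standard finite-dimensional norm equivalence
\[
\|v\|_1 = \sum_{k=1}^{p} |v^{(k)}| \;\leq\; \sqrt{p}\,\|v\|_2,
\]
which is immediate from Cauchy--Schwarz applied to $v^{(k)}$ and the constant vector $1$. Dividing both sides by $p$ yields $\tfrac{1}{p}\|v\|_1 \leq p^{-1/2}\,\|v\|_2$.

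Second, I would substitute the upper bound on $\|v\|_2 = \|\mu_{0,1:p}-\mu_{1,1:p}\|$ furnished by Theorem~\ref{thm:general mean discrepancy}, namely the minimum of $\|\Sigma_{0,1:p}\|^{1/2}_{\textup{op}}\sqrt{B_{\chi^2(1\|0)}}$ and $\|\Sigma_{1,1:p}\|^{1/2}_{\textup{op}}\sqrt{B_{\chi^2(0\|1)}}$. Chaining these two inequalities produces exactly \eqref{eq:general mean MAD}.

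There is essentially no technical obstacle: the corollary is a cosmetic restatement of Theorem~\ref{thm:general mean discrepancy} that makes the scaling in $p$ transparent. The only point worth emphasizing in the write-up is that the $p^{-1/2}$ factor comes entirely from the $\|\cdot\|_1$-to-$\|\cdot\|_2$ conversion, so that if $\|\Sigma_{\cdot,1:p}\|_{\textup{op}}$ grows slower than $p^{1/2}$ (i.e., the covariates are not too strongly correlated), then the average per-coordinate imbalance must vanish as $p \to \infty$ under strict overlap.
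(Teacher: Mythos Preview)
Your proposal is correct and matches the paper's implicit argument: the corollary is stated without a separate proof precisely because it follows from Theorem~\ref{thm:general mean discrepancy} via the $\ell^1$--$\ell^2$ inequality $\|v\|_1 \le \sqrt{p}\,\|v\|_2$, exactly as you describe. One small slip in your closing remark: the right-hand side $p^{-1/2}\|\Sigma_{\cdot,1:p}\|_{\textup{op}}^{1/2}$ vanishes whenever $\|\Sigma_{\cdot,1:p}\|_{\textup{op}} = o(p)$, not $o(p^{1/2})$.
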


The mean discrepancy bounds in Theorem~\ref{thm:general mean discrepancy} and Corollary~\ref{cor:general mean MAD} depend on the operator norms of the covariance matrices $\Sigma_{0,1:p}$ and $\Sigma_{1,1:p}$.
The operator norm is equal to the largest eigenvalue of the covariance matrix and is a proxy for the degree to which the covariates $X_{1:p}$ are correlated.
In particular, the operator norm is large relative to the dimension $p$ if and only if a large proportion of the variance in $X_{1:p}$ is contained in a low-dimensional projection of $X_{1:p}$.
For example, in the cases where the components of $X_{1:p}$ are independent, or where $X_{1:p}$ are samples from a stationary ergodic process, the operator norm scales like a constant in $p$.
On the other hand, in the case where the variance in $X_{1:p}$ is dominated by a low-dimensional latent factor model, the operator norm scales linearly in $p$.
We treat these examples precisely in the appendix.

Corollary~\ref{cor:general mean MAD} establishes that strict overlap implies that the average mean discrepancy across covariates is not too large relative to the operator norms of the covariance matrices $\Sigma_{0,1:p}$, and $\Sigma_{1,1:p}$.
When $p$ is large, these implications are strong.
To explore this, let $(X_{(k)})_{k > 0}$ be a sequence of covariates such that for each $p$, $X_{1:p} \subset (X_{(k)})_{k > 0}$. 
When the smaller operator norm $\min ( \|\Sigma_{0,1:p}\|_{\textup{op}},
\|\Sigma_{1,1:p}\|_{\textup{op}} )$ grows more slowly than $p$,
the bound in \eqref{eq:general mean MAD} 
converges to zero, implying that the covariate means are, on average, arbitrarily close to balance. 
On the other hand, for the bound to remain non-zero as $p$ grows large, both operator norms must grow at the same rate as $p$.
This is a strong restriction on the covariance structure; it implies that all but a vanishing proportion of the variance in $X_{1:p}$ concentrates in a finite-dimensional subspace under both $P_0$ and $P_1$. 

\begin{remark}
Theorem~\ref{thm:general mean discrepancy} bounds the mean discrepancy of $X_{1:p}$, which is a special case of a bound on any functional discrepancy of the form $\left|\E_{P_0}\{ g(X_{1:p})\}  - \E_{P_1}\{ g(X_{1:p}) \} \right|$ for any function $g : \mathbb R^p \mapsto \mathbb R$
that is measurable and square-integrable under $P_0$ or $P_1$.
This result is of independent interest, and is included in the appendix. 
\end{remark}

\subsection{Strict Overlap Restricts General Distinguishability}
\label{sec:no good classifier}

In addition to bounds on mean discrepancies, strict overlap also implies restrictions on more general discrepancies between $P_0(X_{1:p})$ and $P_1(X_{1:p})$.
In this section, we present two additional results showing that strict overlap restricts how well the covariate distributions can be distinguished from each other.

First, we show that Assumption~\ref{assn:strict overlap} restricts the extent to which $P_0(X_{1:p})$ can be distinguished from $P_1(X_{1:p})$ by any classifier or statistical test.
Let $\phi(X_{1:p})$ be a classifier that maps from the covariate support $\mc X_{1:p}$ to $\{0,1\}$. We have the following upper bound on the accuracy of any classifier $\phi(X_{1:p})$ when Assumption~\ref{assn:strict overlap} holds.
\begin{proposition}
\label{prop:no good classifier}
Let $\phi(X_{1:p})$ be an arbitrary classifier of $P_0(X_{1:p})$ against $P_1(X_{1:p})$. Assumption \ref{assn:strict overlap} implies the following upper bound on the accuracy of $\phi(X_{1:p})$:
$$
P(\phi(X_{1:p}) = T) \leq 1-\eta.
$$
\begin{proof}
Let 
\begin{equation}\label{eq::bayes-classifier}
\tilde \phi(X_{1:p}) = I\{  e(X_{1:p}) \geq 0.5 \}
\end{equation} 
be the Bayes optimal classifier. The probability of a correct decision from the Bayes optimal classifier is 
\begin{align*}
P(\tilde \phi(X_{1:p}) = T) &= \E\left[  P\{ \tilde \phi(X_{1:p}) = T\mid e(X_{1:p}) \}   \right] \\
&= \E\left[   I\{  e(X_{1:p}) \geq 0.5 \} e(X_{1:p}) + I\{  e(X_{1:p}) < 0.5 \} \{1-e(X_{1:p}) \}  \right] \\
&=\E\left[  \max\left\{e(X_{1:p}), 1-e(X_{1:p})\right\} \right].
\end{align*}
Assumption~\ref{assn:strict overlap} immediately implies $P(\tilde \phi(X_{1:p}) = T) \leq	 1-\eta$. 
The conclusion follows because the Bayes optimal classifier $\tilde \phi(X_{1:p})$ has the highest accuracy among all classifiers based on the covariate set $X_{1:p}$ \citep[Theorem 2.1]{Devroye1996}.
\end{proof}
\end{proposition}

Asymptotically, by Proposition~\ref{prop:no good classifier}, strict overlap implies that there exists no consistent classifier of $P_0$ against $P_1$ in the large-$p$ limit.

\begin{definition}
\label{def:p-consistent}
A classifier $\phi(X_{1:p})$ is \emph{$p$-consistent} if and only if $P(\phi(X_{1:p}) = T) \rightarrow  1$ as $p$ grows to infinity.
\end{definition}

\begin{corollary}[No Consistent Classifier]
\label{cor:no consistent}
Let $(X_{(k)})_{k > 0}$ be a sequence of covariates, and for each $p$, let $X_{1:p}$ be a finite subset.
If Assumption~\ref{assn:strict overlap} holds as $p$ grows large, there exists no $p$-consistent test of $P_0$ against $P_1$.
\end{corollary}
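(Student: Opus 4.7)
The plan is to derive the corollary as a uniform-in-$p$ consequence of Proposition~\ref{prop:no good classifier}. The hypothesis ``Assumption~\ref{assn:strict overlap} holds as $p$ grows large'' is to be read as: there exists a single constant $\eta \in (0, 0.5)$, independent of $p$, such that $\eta \le e(X_{1:p}) \le 1-\eta$ almost surely for every $p$ in the sequence. (This is automatically compatible with nesting, since by the tower property $e(X_{1:p}) = \E[e(X_{1:p'}) \mid X_{1:p}]$ for $p' \geq p$, so an $\eta$-strict propensity on a larger covariate set descends to every subset.)

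First, I would fix an arbitrary candidate classifier sequence $\phi(X_{1:p})$. For each $p$, Proposition~\ref{prop:no good classifier} applies with the same $\eta$ and yields the inequality
\begin{equation*}
P\bigl(\phi(X_{1:p}) = T\bigr) \;\leq\; 1 - \eta.
\end{equation*}
The key point is that the right-hand side does not depend on $p$, so the bound is uniform along the entire sequence.

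Second, I would take the supremum over $p$ and observe $\limsup_{p \to \infty} P(\phi(X_{1:p}) = T) \leq 1 - \eta < 1$, since $\eta > 0$ by Assumption~\ref{assn:strict overlap}. This directly contradicts Definition~\ref{def:p-consistent}, which demands the accuracy converge to $1$. Hence no $p$-consistent classifier can exist.

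There is essentially no technical obstacle here; the only subtlety is emphasizing that $\eta$ must be interpreted as a common constant across the sequence of covariate sets, so that the per-$p$ Bayes-error bound of Proposition~\ref{prop:no good classifier} transfers into an asymptotic bound strictly less than $1$. The corollary is therefore a direct contrapositive of the uniform accuracy ceiling implied by strict overlap.
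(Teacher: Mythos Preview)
Your proposal is correct and matches the paper's intended argument: the corollary is stated without proof because it is an immediate consequence of Proposition~\ref{prop:no good classifier}, and your write-up makes that deduction explicit by noting that the bound $1-\eta$ is uniform in $p$ and strictly less than $1$. Your remark that $\eta$ must be a single constant independent of $p$ is the correct reading of the hypothesis and is the only point that needs spelling out.
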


We can characterize the relationship between the dimension $p$ and the distinguishability of $P_0(X_{1:p})$ from $P_1(X_{1:p})$ non-asymptotically by examining the Kullback--Leibler divergence.
The following result is a special case of Theorem~\ref{thm:f bound}, included in the appendix.

\begin{proposition}[KL Divergence Bound]
\label{prop:KL bound}
Assumption \ref{assn:strict overlap} implies 
$$
\textup{KL}(P_1(X_{1:p}) \| P_0(X_{1:p}))
\leq  B_{\textup{KL}(1\|0)},\qquad
\textup{KL}(P_0(X_{1:p}) \| P_1(X_{1:p}))
\leq  B_{\textup{KL}(0\|1)}, 
$$
where
\begin{align*}
B_{\textup{KL}(1\|0)} &:= \frac{(1-b_{\min})b_{\max}\log b_{\max} + (b_{\max}-1)b_{\min}\log b_{\min}}
{b_{\max} - b_{\min}} , \\
B_{\textup{KL}(0\|1)} &:= -\frac{(1-b_{\min})\log b_{\max} + (b_{\max}-1)\log b_{\min}}
{b_{\max} - b_{\min}} 
\end{align*}
are free of $p$, with $b_{\min}$ and $b_{\max}$ defined in \eqref{eq:bmin-bmax-def}. 
\end{proposition}

In the case of balanced treatment assignment with $\pi = 0.5$,  
$B_{\textup{KL}(1\|0)}$ and $B_{\textup{KL}(0\|1)}$ have a simple form:
$$
B_{\textup{KL}(1\|0)} = B_{\textup{KL}(0\|1)} = (1-2\eta) \left| \log \frac{\eta}{1-\eta} \right|.
$$

Proposition~\ref{prop:KL bound} becomes more restrictive for larger values of $p$.
This follows because neither bound in Proposition~\ref{prop:KL bound} depends on $p$, while the KL divergence is free to grow in $p$. 
In particular, by the so-called chain rule, the KL divergence can be expanded into a summation of $p$ non-negative terms \citep[Theorem 2.5.3]{Cover2005}:
\begin{align}
\label{eq:KL accumulate}
\textup{KL}(P_1(X_{1:p}) \| P_0(X_{1:p})) =
\sum_{k=1}^p \E_{P_1} \left\{\textup{KL}(P_1(X_{(k)} \mid X_{1:k-1}) \| P_0(X_{(k)} \mid X_{1:k-1}))\right\}.
\end{align}
Each term in \eqref{eq:KL accumulate} is the expected KL divergence between the conditional distributions of the $k$th covariate $X_{(k)}$ under $P_0$ and $P_1$, after conditioning on all previous covariates $X_{1:k-1}$.
Thus, each term corresponds to the discriminating information added by $X_{(k)}$, beyond the information contained in $X_{1:k-1}$.
In the large-$p$ limit, strict overlap implies that the average unique discriminating information contained in each covariate $X_{(k)}$ converges to zero.

\begin{corollary}
\label{cor:vanishing KL}
Let $(X_{(k)})_{k > 0}$ be a sequence of covariates, and for each $p$, let $X_{1:p}$ be a finite subset of $(X_{(k)})_{k > 0}$.
As $p$ grows large, Assumption \ref{assn:strict overlap} implies
\begin{align}
 p^{-1}\sum_{k=1}^p \E_{P_1} \left\{\textup{KL}(P_1(X_{(k)} \mid X_{1:k-1}) \| P_0(X_{(k)} \mid X_{1:k-1}))\right\} = O(p^{-1}),
\end{align}
and likewise for the KL divergence evaluated in the opposite direction.
\end{corollary}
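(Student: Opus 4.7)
The plan is to combine the chain-rule identity in \eqref{eq:KL accumulate} with the dimension-free upper bound in Proposition~\ref{prop:KL bound}. The right-hand side of \eqref{eq:KL accumulate} is precisely the unnormalized version of the quantity appearing inside the average in the corollary statement, and it equals the joint divergence $\textup{KL}(P_1(X_{1:p}) \| P_0(X_{1:p}))$.

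First, I would observe that the bound $B_{\textup{KL}(1\|0)}$ in Proposition~\ref{prop:KL bound} depends only on $\eta$ and $\pi$ through $b_{\min}$ and $b_{\max}$, and in particular is free of $p$. Since the corollary assumes that strict overlap holds with the same fixed bound $\eta$ for every $p$ (with $\pi$ also fixed, as stipulated in Section~3.1), we have a uniform constant $C := B_{\textup{KL}(1\|0)}$ such that $\textup{KL}(P_1(X_{1:p}) \| P_0(X_{1:p})) \leq C$ for all $p$. Substituting this into the chain-rule identity \eqref{eq:KL accumulate} and dividing by $p$ immediately yields
\[
\frac{1}{p}\sum_{k=1}^p \E_{P_1} \left\{\textup{KL}(P_1(X^{(k)} \mid X_{1:k-1}) \| P_0(X^{(k)} \mid X_{1:k-1}))\right\} \leq \frac{C}{p} = O(p^{-1}).
\]
For the opposite direction, I would run the same argument with the roles of $P_0$ and $P_1$ swapped, invoking the second inequality of Proposition~\ref{prop:KL bound} to obtain the constant $B_{\textup{KL}(0\|1)}$ as the uniform bound.

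There is no real obstacle here; the corollary is essentially a direct packaging of Proposition~\ref{prop:KL bound} through the chain rule. The only point worth flagging is that each summand on the right-hand side of \eqref{eq:KL accumulate} is a (non-negative) expected KL divergence, so the uniform bound on the total actually constrains the arithmetic mean — without non-negativity, a bound on the sum would not translate into a meaningful bound on the per-covariate average.
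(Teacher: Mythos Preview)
Your argument is correct and is exactly the route the paper intends: the corollary is obtained by dividing the chain-rule identity \eqref{eq:KL accumulate} by $p$ and applying the $p$-free bounds $B_{\textup{KL}(1\|0)}$ and $B_{\textup{KL}(0\|1)}$ from Proposition~\ref{prop:KL bound}. Your remark that non-negativity of the summands is what makes the bound on the sum meaningful for the average is apt, though the paper leaves this implicit.
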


By Corollary~\ref{cor:vanishing KL}, strict overlap implies that, on average, the conditional distributions of each covariate $X_{(k)}$, given all previous covariates $X_{1:k-1}$, are arbitrarily close to balance. 
In the special case where the covariates $X_{(k)}$ are mutually independent under both $P_0$ and $P_1$, Corollary~\ref{cor:vanishing KL} implies that, on average, the marginal treated and control distributions for each covariate $X_{(k)}$ are arbitrarily close to balance.

\section{Strict Overlap and Modeling Assumptions}
\label{sec:outcome_models_main}

\subsection{Treatment Models: Strict Overlap with Fewer Implications}
In this section, we discuss how the implications of strict overlap align with common modeling assumptions about the assignment mechanism.
We show that certain modeling assumptions already impose many of the constraints that strict overlap implies.
Thus, if one is willing to accept these modeling assumptions, strict overlap has fewer unique implications.

We will focus specifically on the class of modeling assumptions that assert that the propensity score $e(X_{1:p})$ is only a function of a sufficient summary of the covariates $b(X_{1:p})$.
In this case, overlap in the summary $b(X_{1:p})$ implies overlap in the full set of covariates $X_{1:p}$.
Models in this class include sparse models and latent variable models.

\begin{assumption}[Sufficient Condition for Strict Overlap]
\label{assn:sufficient}
There exists some function of the covariates $b(X_{1:p})$ satisfying the following two conditions:
\begin{gather*}
X_{1:p} \indep T \mid b(X_{1:p}),\\ 
\eta \leq e_b(X_{1:p})   \leq 1-\eta,
\end{gather*}
where $e_b(X_{1:p}) := P(T = 1 \mid b(X_{1:p}))$. 
\end{assumption}

Here, the variable $b(X_{1:p})$ is a \emph{balancing score} as in \citet{Rosenbaum1983}.
The propensity score is the coarsest balancing score in the sense that there exists some $h(\cdot)$ such that
$
e(X_{1:p}) = h(b(X_{1:p})).
$
Thus, $b(X_{1:p})$ is a sufficient summary of the covariates $X_{1:p}$ for the treatment assignment $T$,
and
overlap in $b(X_{1:p})$ is a sufficient condition for overlap in the entire covariate set $X_{1:p}$.

\begin{proposition}[Sufficient Condition Statement]
    \label{prop:sufficient}
    Assumption~\ref{assn:sufficient} implies Assumption \ref{assn:strict overlap}. 
    \begin{proof}
The conclusion follows from 
       $e(X_{1:p}) = P(T = 1 \mid X_{1:p}) = \E\{  P(T = 1 \mid X_{1:p}, b(X_{1:p})) \mid X_{1:p} \} = \E\{  P(T = 1 \mid b(X_{1:p})) \mid X_{1:p} \} 
       = \E\{ e_b(X_{1:p}) \mid X_{1:p} \} = e_b(X_{1:p})$.
    \end{proof}
\end{proposition}

Assumption~\ref{assn:sufficient} has some trivial specifications, which are useful examples.
At one extreme, we may specify that
$b(X_{1:p}) = e(X_{1:p})$.
In this case, Assumption~\ref{assn:sufficient} is vacuous: there are no restrictions on the form of the
propensity score; and strict overlap overall is equivalent to strict overlap with respect to $b(X_{1:p})$.
At the other extreme, we may specify $b(X_{1:p})$ to be a constant, i.e., we assume that the data were generated from a randomized trial.
In this case, the overlap condition in Assumption~\ref{assn:sufficient} holds automatically.

Of particular interest are restrictions on $b(X_{1:p})$ between these two extremes, such as the sparse propensity score model in Example~\ref{ex:sparse pscore} below.
Such restrictions trade off stronger modeling assumptions on the
propensity score
$e(X_{1:p})$ with weaker implications of strict overlap.\footnote{These specifications exclude cases such as deterministic treatment rules or treatment assignment in a Regression Discontinuity Design:
even when the covariates are high-dimensional, the information they contain about the treatment assignment is upper bounded by the information contained in $b(X_{1:p})$.}

\begin{example}[Sparse Propensity Score]
\label{ex:sparse pscore}
Consider a study where the propensity score is sparse in the covariate set $X_{1:p}$, so that for some subset of covariates $X_{1:s} \subset X_{1:p}$ with $s < p$,
$$
e(X_{1:p}) = e(X_{1:s}).
$$
This implies
$$
X_{1:p} \indep T \mid X_{1:s}, 
$$
and $e(X_{1:s})$ is a balancing score. 
In this case, strict overlap in the lower-dimensional $X_{1:s}$ implies strict overlap for $X_{1:p}$.
\citet{Belloni2013} and \citet{Farrell2015} propose a specification similar to this, with an ``approximately sparse'' specification for the propensity score.
The approximately sparse specification in these papers is broader than the model defined here, but has similar implications for overlap.
\end{example}

\begin{example}[Latent Variable Model for Propensity Score]
\label{ex:latent variable}
Consider a study where the
treatment assignment mechanism is only a function of some possibly multivariate latent variable $U$, such that
$$
X_{1:p} \indep T \mid U.
$$
For example, such a structure exists when treatment is assigned only as a function of a latent class or latent factor.
In that case, the projection of $e(U) := P(T = 1 \mid U)$ onto $X_{1:p}$ is a balancing score:\footnote{The scalar $b_U(X_{1:p}) := \E\{ e(U) \mid X_{1:p} \}$ is a balancing score because it is equal to the propensity score  $e(X_{1:p}) := P(T = 1 \mid X_{1:p})$. 
Specifically, $e(X_{1:p}) = P(T = 1 \mid X_{1:p}) = \E\{  P(T = 1 \mid X_{1:p}, U) \mid X_{1:p} \} = \E\{  P(T = 1 \mid U) \mid X_{1:p} \} = \E\{ e(U) \mid X_{1:p} \} = b_U(X_{1:p})$.}

\begin{align}
X_{1:p} \indep T \mid  b_U(X_{1:p}) , \label{eq:balancing projection}
\end{align}
where $b_U(X_{1:p}) := \E\{ e(U) \mid X_{1:p} \}$.
Due to \eqref{eq:balancing projection}, strict overlap in the latent variable $U$ implies strict overlap in $b_U(X_{1:p})$, which implies strict overlap in $X_{1:p}$ by Proposition~\ref{prop:sufficient}.
\citet{Athey2016} propose a specification similar to this in their simulations, in which the propensity score is dense with respect to observable covariates but can be specified simply in terms of a latent class.

\end{example}

\subsection{Outcome Models: Identification and Estimation with Weaker Overlap}
\label{sec:outcome models}

The average treatment effect can be identified and estimated under weaker overlap conditions if one is willing to make structural assumptions about the data generating process.
For example, if one assumes that the conditional expectations of outcomes $\E[Y(0) \mid X_{1:p}]$ and $\E[ Y(1) \mid X_{1:p}]$ belong to a restricted class, \citet{Hansen2008} established that $\ATE$ can be estimated under Assumption \ref{assn:unconfounded} and the following assumption.

\begin{assumption}[Prognostic Identification]
There exists some function $r(X_{1:p})$ satisfying the following two conditions 
\begin{gather}
(Y(0), Y(1)) \indep X_{1:p} \mid r(X_{1:p}), \label{eq:prognostic}\\
\eta \leq e_r(X_{1:p})  \leq 1-\eta , \label{eq:prognostic overlap}
\end{gather}
where $e_r(X_{1:p}) := P(T = 1\mid r(X_{1:p})) .$
\end{assumption}

Modifying \citet{Hansen2008}'s nomenclature slightly, we call $r(X_{1:p})$ a prognostic score.
The assumption of strict overlap in a prognostic score $r(X_{1:p})$ in \eqref{eq:prognostic overlap} is never more stringent than Assumption~\ref{assn:strict overlap} with the same $\eta$.\footnote{By the law of iterated expectations, if $\eta \leq e(X_{1:p}) \leq 1-\eta$, then
$e_{r}(X_{1:p}) = P(T = 1\mid r(X_{1:p})) = \E \{ P(T = 1\mid X_{1:p}, r(X_{1:p}))\mid r(X_{1:p}) \} 
= \E \{ P(T = 1\mid X_{1:p}) \mid r(X_{1:p}) \}\in [\eta, 1 - \eta].$ }
\citet{VanderLaan2010} and \citet{Luo2017} propose methodology designed to exploit this sort of structure.

One can also weaken overlap requirements by imposing modeling assumptions on the outcome process via the conditional average treatment effect $\tau(X_{1:p}) := \E[Y(1) - Y(0) \mid X_{1:p}]$.
If $\tau(X_{1:p})$ is assumed constant, for example, in the case of the partial linear model \citep{Belloni2014,Farrell2015}, then estimation of $\ATE$ only requires that strict overlap hold with positive probability, rather than with probability 1.
\begin{assumption}[Strict Overlap with Positive Probability]
\label{assn:constant tx}
For some $\delta > 0$,
$$
P(\eta \leq e(X_{1:p}) \leq 1-\eta) > \delta.
$$
\end{assumption}
Here, Assumption~\ref{assn:constant tx} is sufficient because the constant treatment effect assumption justifies extrapolation from subpopulations where the treatment effect can be estimated to other subpopulations for which strict overlap may fail.
The constant treatment effect assumption can also be used to justify trimming strategies, which we turn to next.%

\subsection{Trimming}
\label{sec:trimming}
When Assumption~\ref{assn:strict overlap} does not hold, one can still estimate an average treatment effect within a subpopulation in which strict overlap does hold.
This motivates the common practice of trimming, where the investigator drops observations in regions without overlap \citep{dehejia1999causal, Crump2009, Petersen2012, yang2018asymptotic}.
In general, trimming changes the estimand unless additional structure, such as a constant treatment effect, is imposed on the conditional treatment effect surface $\tau(X_{1:p})$.\footnote{An alternative strategy is to estimate a weighted average of the conditional treatment effect, e.g., $\tau^w = \E\{  w(X_{1:p}) \tau(X_{1:p}) \}$ with $ w(X_{1:p}) \propto e( X_{1:p}) \{1 - e( X_{1:p}) \}$ \citep{crump2006moving, li2018balancing}. This estimand downweights the units with propensity scores close to zero and one, and can be viewed as a smooth version of trimming. We anticipate that our argument extends to this weighting case as well.}

Our results suggest that trimming may need to be employed more often when the covariate dimension $p$ is large, especially in cases where overlap violations result from small imbalances accumulated over many dimensions.
In these cases, trimming procedures may have undesirable properties for the same reason that strict overlap does not hold.
For example, in high dimensions, one may need to trim a large proportion of units to achieve desirable overlap in the new target subpopulation.
The proportion of units that can be retained under a trimming policy designed to achieve overlap bound $\tilde \eta$ is related to the accuracy of the Bayes optimal classifier in \eqref{eq::bayes-classifier} by the following proposition. 
\begin{proposition}
For an overlap bound $\tilde{\eta} \in (0,1/2)$, we have
$$
P\left(\tilde \eta \leq e(X_{1:p}) \leq 1-\tilde \eta\right) \leq
\left[  1-P\left(\tilde\phi(X_{1:p}) = T\right)\right] \big / \tilde \eta.
$$
\begin{proof}%
Define the event $\mathcal{A} := \{\tilde \eta \leq e(X_{1:p}) \leq 1-\tilde \eta\}$.
The conclusion follows from
$$
P(\tilde \phi(X_{1:p}) \neq T) \geq P(\mathcal{A} ) P(\tilde \phi(X_{1:p}) \neq T \mid \mathcal{A} ) \geq P(\mathcal{A} ) \tilde \eta. \hfill\qedhere
$$\vspace{-2em}
\end{proof}
\end{proposition}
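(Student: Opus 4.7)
The plan is to leverage the Bayes optimal classifier $\tilde\phi(X_{1:p}) = I\{e(X_{1:p}) \geq 0.5\}$ defined in \eqref{eq::bayes-classifier}, together with the simple observation that its conditional error probability is minimized pointwise by $\min\{e(X_{1:p}), 1-e(X_{1:p})\}$. The key fact is that on the overlap event $A := \{\tilde\eta \leq e(X_{1:p}) \leq 1-\tilde\eta\}$, this pointwise error rate is bounded below by $\tilde\eta$, so any unit in the well-overlapped subpopulation is genuinely hard to classify.

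First I would write
\[
P(\tilde\phi(X_{1:p}) \neq T) = \E\left[\min\{e(X_{1:p}),\, 1-e(X_{1:p})\}\right],
\]
which follows from the same calculation as in \eqref{eq:bayes bound}. Conditioning on $A$ versus $A^c$ and dropping the (nonnegative) contribution from $A^c$ gives
\[
P(\tilde\phi(X_{1:p}) \neq T) \geq P(A)\cdot \E\left[\min\{e(X_{1:p}),\, 1-e(X_{1:p})\} \,\big|\, A\right].
\]
On $A$, by definition $\min\{e(X_{1:p}), 1-e(X_{1:p})\} \geq \tilde\eta$, so the conditional expectation is at least $\tilde\eta$, yielding
\[
P(\tilde\phi(X_{1:p}) \neq T) \geq \tilde\eta \cdot P(A).
\]
Rearranging and substituting $P(\tilde\phi(X_{1:p}) \neq T) = 1 - P(\tilde\phi(X_{1:p}) = T)$ then gives the claim.

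There is essentially no technical obstacle here; the only thing to verify carefully is the identification of $P(\tilde\phi(X_{1:p}) \neq T \mid A)$ with a conditional expectation of $\min\{e, 1-e\}$, which amounts to applying the tower property and recognizing that the Bayes optimal classifier's conditional error given $X_{1:p}$ is exactly $\min\{e(X_{1:p}), 1-e(X_{1:p})\}$. The substantive content is that accuracy of the Bayes classifier is an upper bound on the mass of any subpopulation on which strict overlap with level $\tilde\eta$ holds, providing a quick certificate that in high-dimensional settings where $P(\tilde\phi = T)$ is close to $1$, trimming to achieve overlap bound $\tilde\eta$ necessarily discards nearly all the sample.
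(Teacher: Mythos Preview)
Your proof is correct and follows essentially the same approach as the paper: define the overlap event $A$, bound $P(\tilde\phi \neq T) \geq P(A)\,P(\tilde\phi \neq T \mid A) \geq P(A)\,\tilde\eta$, and rearrange. The only difference is that you spell out explicitly why the conditional error on $A$ is at least $\tilde\eta$ via the identity $P(\tilde\phi \neq T \mid X_{1:p}) = \min\{e(X_{1:p}),1-e(X_{1:p})\}$, whereas the paper leaves this implicit.
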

When large covariate sets $X_{1:p}$ enable units to be more accurately classified in treatment and control, the probability that a unit has an acceptable propensity score becomes small.
In this case, a trimming procedure must throw away a large proportion of the sample.
In the large-$p$ limit, if the Bayes optimal classifier $\tilde \phi(X_{1:p})$ is consistent in the sense of Definition~\ref{def:p-consistent}, then the expected proportion of the sample that must be discarded to achieve any $\tilde \eta$ approaches 1.

\section{Discussion}
\label{sec:future work}

In this paper, we have shown that the strict overlap assumption has strong implications in settings with high-dimensional covariates.
In particular, we show that the strict overlap assumption implies that the information distinguishing the treated and control covariate distributions must remain fixed --- even as the dimension of the covariates grows. 
This results in binding, population-level restrictions on the data-generating process.
Importantly, techniques such as regularization do not avoid these restrictions, though they are often necessary for estimation with high-dimensional covariates.

Our results suggest that overlap assumptions should be carefully considered when adjusting for rich covariates.
First, strict overlap is a testable assumption in the sense that, for any fixed bound $\eta$, one can construct finite-sample exact tests \citep{overlap_testing}.
We explore this in separate work and suggest that such empirical validation should be standard practice in these settings.
In addition, in cases where the unconfoundedness assumption is violated, overlap appears to play a key role in bias amplification phenomena that result from adjusting for covariates, such as instruments, that are highly predictive of treatment assignment but not of the outcome
\citep{Myers2011,Pearl2010,Ding2017}.
As the dimensionality increases, appropriately accounting for these complications is important both from a population and finite-sample perspective.

\clearpage

\appendix
\numberwithin{equation}{section}
\clearpage

\section{Strict Overlap Implies Bounded $f$-Divergences}
\label{sec:f bound}

Here, we adapt a theorem from information theory, due to \citet{Rukhin1997}, to derive general implications of strict overlap.
The theorem states that a likelihood ratio bound of the form \eqref{eq:bmin bmax} implies upper bounds on $f$-divergences between $P_0$ and $P_1$.
$f$-divergences are a family of discrepancy measures between probability distributions defined in terms of a convex function $f$ \citep{Csis63,Ali1966,Liese2006}.
Formally, the $f$-divergence from some probability measure $Q_0$ to another $Q_1$ is defined as
$$
D_f(Q_1(X_{1:p}) \| Q_0(X_{1:p})) := \E_{Q_0}\left[f\left(\frac{\d Q_1(X_{1:p})}{\d Q_0(X_{1:p})}\right)\right],
$$
$f$-divergences are non-negative, achieve a minimum when $Q_0 = Q_1$, and are, in general, asymmetric in their arguments.
Common examples of $f$-divergences include the Kullback--Leibler divergence, with $f(t) = t\log t$, and the $\chi^2$- or Pearson divergence, with $f(t) = (t - 1)^2$.
Here, we restate \citet{Rukhin1997}'s theorem in terms of strict overlap and the bounds defined in \eqref{eq:bmin bmax}. 

\begin{theorem}
\label{thm:f bound}
Let $D_f$ be an $f$-divergence such that $f$ has a minimum at $1$. 
Assumption \ref{assn:strict overlap} implies
\begin{align}
D_f(P_1(X_{1:p})\|P_0(X_{1:p}))
	&\leq \frac{b_{\max}-1}{b_{\max} - b_{\min}}f(b_{\min}) + 
    \frac{1-b_{\min}}{b_{\max} - b_{\min}}f(b_{\max}),\label{eq:rukhin bound}\\\notag\\
D_f(P_0(X_{1:p})\|P_1(X_{1:p}))
  &\leq \frac{b_{\min}^{-1}-1}{b_{\min}^{-1} - b_{\max}^{-1}}f(b_{\max}^{-1}) + 
  \frac{1-b_{\max}^{-1}}{b_{\min}^{-1} -
  b_{\max}^{-1}}f(b_{\min}^{-1}).\label{eq:reverse rukhin bound}
\end{align}
 \begin{proof}
 Theorem 2.1 of \citet{Rukhin1997} shows that the likelihood ratio bound in \eqref{eq:bmin bmax} implies the bounds in \eqref{eq:rukhin bound} and \eqref{eq:reverse rukhin bound} when $f$ has a minimum at $1$ and is ``bowl-shaped'', i.e., non-increasing on $(0,1)$ and non-decreasing on $(1,\infty)$.
 The ``bowl-shaped'' constraint is satisfied because $f$ is convex.
 \end{proof}
\end{theorem}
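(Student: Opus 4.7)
The plan is to derive both bounds from a single elementary observation: any convex function lies below its secant line on a bounded interval. By Bayes' rule, Assumption~\ref{assn:strict overlap} is equivalent to \eqref{eq:bmin bmax}, so the likelihood ratio $L := dP_1(X_{1:p})/dP_0(X_{1:p})$ takes values only in $[b_{\min}, b_{\max}]$ with probability 1 under $P_0$. Since $f$ is convex, for every $t \in [b_{\min}, b_{\max}]$ one has the standard chord inequality
\begin{equation*}
f(t) \;\leq\; \frac{b_{\max}-t}{b_{\max}-b_{\min}}\, f(b_{\min}) + \frac{t-b_{\min}}{b_{\max}-b_{\min}}\, f(b_{\max}).
\end{equation*}

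Substituting $t = L$ and taking expectations under $P_0$ gives a linear combination of $f(b_{\min})$ and $f(b_{\max})$ whose weights depend only on $\E_{P_0}[L]$. Because $L$ is a likelihood ratio between two probability measures, $\E_{P_0}[L] = \int (dP_1/dP_0)\, dP_0 = 1$, so the weights collapse to the constants appearing in \eqref{eq:rukhin bound}. This yields the first bound. For the second bound, the same argument is applied after swapping the roles of $P_0$ and $P_1$: the reciprocal ratio $L^{-1} = dP_0/dP_1$ is bounded in $[b_{\max}^{-1}, b_{\min}^{-1}]$ with probability 1 under $P_1$, and $\E_{P_1}[L^{-1}] = 1$, so the chord inequality evaluated at endpoints $b_{\max}^{-1}$ and $b_{\min}^{-1}$ produces exactly \eqref{eq:reverse rukhin bound}.

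There is essentially no hard step here; the only subtlety worth flagging is that the hypothesis ``$f$ has a minimum at $1$'' is not needed for the inequality itself—pure convexity of $f$ on $[b_{\min},b_{\max}]$ (respectively $[b_{\max}^{-1}, b_{\min}^{-1}]$) suffices for the chord bound, and the normalization $\E_{P_0}[L]=\E_{P_1}[L^{-1}]=1$ does the rest. The minimum-at-$1$ hypothesis is natural only to ensure that $D_f$ is a bona fide divergence (non-negative, vanishing at $P_0=P_1$), which is the context in which the bound is interpreted but plays no role in the derivation. Thus the main ``obstacle'' is really just recognizing that the likelihood-ratio bound plus Jensen-type reasoning, via the chord inequality and the unit-mean property, delivers both inequalities simultaneously.
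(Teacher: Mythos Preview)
Your argument is correct and self-contained, whereas the paper simply invokes Theorem~2.1 of \citet{Rukhin1997} and notes that a convex $f$ with a minimum at $1$ is automatically ``bowl-shaped.'' The underlying mechanism is the same---Rukhin's bound is exactly the chord inequality for a convex function evaluated at a random argument with unit mean---but you supply the two-line derivation directly rather than by citation. This buys transparency: the reader sees immediately that only convexity of $f$ on the bounded interval $[b_{\min},b_{\max}]$ together with $\E_{P_0}[L]=1$ is used, and nothing about the global shape of $f$. Your observation that the ``minimum at $1$'' hypothesis plays no role in the inequality itself is also correct and slightly sharpens the statement; the paper retains that hypothesis only to match Rukhin's bowl-shaped framing, which your chord argument bypasses entirely.
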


\section{Proof of Theorem~\ref{thm:general mean discrepancy}}
\subsection{Strict Overlap Implies Bounded Functional Discrepancy} 
\label{sec:functional discrepancy}
The proof of Theorem \ref{thm:general mean discrepancy} follows from several steps, each of which is of independent interest.

Here, we apply Theorem~\ref{thm:f bound} to show that strict overlap implies an upper bound on functional discrepancies of the form
\begin{align}
\left|\E_{P_0}\{ g(X_{1:p})\}  - \E_{P_1}\{ g(X_{1:p}) \} \right|\label{eq:functional discrepancy}
\end{align}
for any function $g : \mathbb R^p \mapsto \mathbb R$
that is measurable %
under $P_0$ and $P_1$.
This result plays a key role in the proof of Theorem~\ref{thm:general mean discrepancy}, but is general enough to be of independent interest.

We establish this bound by applying Theorem~\ref{thm:f bound} to the special case of the $\chi^2$-divergence
$$
\chi^2(Q_1(X_{1:p}) \| Q_0(X_{1:p})) := \E_{Q_0}\left[\left(\frac{\d Q_1(X_{1:p})}{\d Q_0(X_{1:p})} - 1\right)^2\right].
$$

Strict overlap implies the following bound on the $\chi^2$-divergence.
\begin{corollary}
\label{cor:chi square bound}
Assumption \ref{assn:strict overlap} implies  
\begin{align}
\chi^2(P_1(X_{1:p}) \| P_0(X_{1:p})) \leq  B_{\chi^2(1\|0)}, \qquad 
\chi^2(P_0(X_{1:p}) \| P_1(X_{1:p})) \leq   B_{\chi^2(0\|1)} , \label{eq:chisq10} 
\end{align}
where
$$
B_{\chi^2(1\|0)} := (1-b_{\min})(b_{\max}-1),\qquad 
B_{\chi^2(0\|1)} := (1-b_{\max}^{-1})(b_{\min}^{-1} -1). 
$$
\end{corollary}

In the case of balanced treatment assignment with $\pi = 0.5$,  
$B_{\chi^2(1\|0)}$ and $B_{\chi^2(0\|1)}$ have a simple form:
$
B_{\chi^2(1\|0)} = B_{\chi^2(0\|1)} = \{ \eta(1-\eta) \}^{-1} - 4.
$

We now apply Corollary~\ref{cor:chi square bound} to show that strict overlap implies an explicit upper bound on functional discrepancies of form \eqref{eq:functional discrepancy}. Below we let $\|g\|_{P,q} := \left[ \E_P\{ |g|^q \} \right]^{1/q}$ denote the $q$-norm of the function $g$ under measure $P$.

\begin{corollary}
\label{thm:functional discrepancy bound}
Assumption \ref{assn:strict overlap} implies
\begin{align}
\left|\E_{P_1} [g(X_{1:P})] - \E_{P_0} [g(X_{1:p})]\right| \leq
\min\Big\{
 \Var_{P_0}^{1/2}(g(X_{1:p})) \cdot
 B_{\chi^2(1\|0)}^{1/2},
 \Var_{P_1}^{1/2}(g(X_{1:p})) \cdot  B_{\chi^2(0\|1)}^{1/2}
\Big\}. \label{eq:chi2 discrepancy} 
\end{align}
\end{corollary}
\begin{proof}
By the Cauchy-Schwarz inequality, 
\begin{align}
|\E_{P_1} [g(X_{1:p})] - \E_{P_0} [g(X_{1:p})]| &=
\left|\E_{P_0} \left[(g(X_{1:p})-C) \cdot \left(\frac{\d P _1(X_{1:p})}{\d P _0(X_{1:p})} - 1\right)\right]\right|\label{eq:decomposition}\\
&\leq\|g(X_{1:p})-C\|_{P_0,2} \cdot \sqrt{\chi^2(P_1(X_{1:p})\|P_0(X_{1:p}))},
\label{eq:cauchy schwarz bound}
\end{align}
for any finite constant $C$.
A similar bound holds with respect to the $\chi^2$-divergence evaluated in the opposite direction.

Let $C = \E_{P_0} [g(X_{1:p})]$ then apply \eqref{eq:cauchy schwarz bound} and Corollary~\ref{cor:chi square bound}.
Do the same for $C = \E_{P_1} [g(X_{1:p})]$.

Corollary~\ref{thm:functional discrepancy bound} remains valid even when $\Var_{P_0}(g(X_{1:p})) = \Var_{P_1}(g(X_{1:p})) = \infty$; in this case, inequality \eqref{eq:chi2 discrepancy} holds automatically.
\end{proof}

\subsection{Proof of Theorem~\ref{thm:general mean discrepancy}}
Theorem~\ref{thm:general mean discrepancy} is a special case of Corollary~\ref{thm:functional discrepancy bound}.
In particular, let $g(X_{1:p}) := a'X_{1:p}$, where $a :=  (\mu_{1,1:p} - \mu_{0,1:p} ) / \|\mu_{1,1:p} - \mu_{0,1:p}\| $ is a vector of unit length, and apply Corollary~\ref{thm:functional discrepancy bound}.
$\Var_{P_0}(a'(X_{1:p} - \mu_{0,1:p}))$ is upper-bounded by $\|\Sigma_{0,1:p}\|_{\textup{op}}$ by definition, and likewise for $P_1$.
The result follows.

\section{Other implications of strict overlap}

The decomposition in \eqref{eq:decomposition} can be used to construct additional upper bounds on the mean discrepancy in $g$ using H\"older's inequality in combination with upper bounds on $\chi^\alpha$-divergences \citep{Vajda1973}.
These bounds give a tighter bound in terms of $\eta$, but are functions of higher-order moments of $g(X_{1:p})$. Formally, $\chi^\alpha$-divergences are a class of divergences that generalize the $\chi^2$-divergence \citep{Vajda1973}:
$$
\chi^\alpha(P_1(X_{1:p}) \| P_0(X_{1:p})) := \E_{P_0}\left[\left|\frac{\d P _1(X_{1:p})}{\d P _0(X_{1:p})}-1\right|^\alpha\right] , \quad (\alpha \geq 1).
$$
The $\chi^\alpha$ divergence in the opposite direction is obtained by switching the roles of $P_0$ and $P_1$.

Theorem 2.1 of \citet{Rukhin1997} implies that, under strict overlap with bound $\eta$,
$$
\chi^\alpha(P_0(X_{1:p}) \| P_1(X_{1:p}))  \leq
B_{\chi^\alpha(0\|1)},\qquad 
\chi^\alpha(P_1(X_{1:p}) \| P_0(X_{1:p}))  \leq
B_{\chi^{\alpha}(1\|0)}, 
$$
where 
\begin{align*}
B_{\chi^\alpha(0\|1)} &:= (b_{\max}-1)(1-b_{\min})\frac{(1-b_{\min})^{\alpha-1}+(b_{\max}-1)^{\alpha-1}}{b_{\max}-b_{\min}},\\
B_{\chi^{\alpha}(1\|0)} &:= (b_{\min}^{-1}-1)(1-b_{\max}^{-1})\frac{(1-b_{\max}^{-1})^{\alpha-1}+(b_{\min}^{-1}-1)^{\alpha-1}}{b_{\min}^{-1}-b_{\max}^{-1}} .
\end{align*}

Applying H\"older's inequality to \eqref{eq:decomposition}, we obtain
\begin{align*}
|\E_{P_1} \{  g(X_{1:p}) \}  - \E_{P_0} \{ g(X_{1:p}) \} | \leq
\min\Big\{
\|g(X_{1:p}) - C\|_{P_0,q_\alpha} \cdot
B_{\chi^\alpha(1\|0)}^{1/\alpha},
\|g(X_{1:p})-C\|_{P_1,q_\alpha}\cdot B_{\chi^\alpha(0\|1)}^{1/\alpha}\Big\}
,
\end{align*}
where $q_\alpha :=  \alpha / (\alpha-1)$ is the H\"older conjugate of $\alpha$.
Setting $C = \E_{P_0} [ g(X_{1:p}) ] $ establishes a relationship between the $q_\alpha$th central moment of $g(X_{1:p})$ under $P_0$ and the functional discrepancy between $P_0$ and $P_1$.
For small values of $\eta$, this bound scales as ${\eta}^{-1/\alpha}$, whereas \eqref{eq:chi2 discrepancy} scales as $\eta^{-1/2}$.

\section{Operator Norm}
\label{sec:operator norm}
The behavior of the bounds in Theorem~\ref{thm:general mean discrepancy} and Corollary~\ref{cor:general mean MAD} depend on the operator norm of the covariance matrix under $P_0$ and $P_1$.
Heuristically, this operator norm is large whenever there is high correlation between the covariates $X_{1:p}$ under the corresponding probability measure.
Thus, these bounds on mean imbalance become more restrictive as the dimension grows.
Because all points in this discussion apply equally to $\Sigma_{0,1:p}$ and $\Sigma_{1,1:p}$, we will refer to a generic covariance matrix $\Sigma_{1:p}$, which can be taken to be either $\Sigma_{0,1:p}$ or $\Sigma_{1,1:p}$.

In this section, we give several examples of covariance structures and the behavior of their corresponding operator norm as $p$ grows large.
In the first two examples, the operator norm is of constant order; in the third example, the growth rate of the operator norm can vary from $O(1)$ to $O(p)$.

\begin{example}[Independent Case]
\label{ex:covariance indep}
When the components of $(X_{(k)})_{k > 0}$ are independent, with component-wise variance given by $\sigma^2_k$, $\|\Sigma_{1:p}\|_{\textup{op}} = \max_{1 \leq k \leq p} \sigma^2_k$.
Thus, if the covariate-wise variances are bounded, the operator norm is $O(1)$.
\end{example}

\begin{example}[Stationary Covariance Case]
\label{ex:covariance stationary}
When $(X_{(k)})_{k > 0}$ is a stationary ergodic process with spectral density bounded by $M$, $\|\Sigma_{1:p}\|_{\textup{op}} \leq M$ \citep{Bickel2004}.
For example, when $(X_{(k)})_{k > 0}$ is an $\textsc{MA}(1)$ process with parameter $\theta$, it has a banded covariance matrix so that all elements on the diagonal $\sigma_{k,k} = \sigma^2$ and all elements on the first off-diagonal $\sigma_{k,k \pm 1} = \theta$.
In this case, the spectral density is upper bounded by $ \sigma^2(1 + \theta)^2/(2\pi)$, so the operator norm is $O(1)$.
\end{example}

\begin{example}[Restricted Rank Case]
\label{ex:covariance factor}
If $(X_{(k)})_{k > 0}$ has component-wise variances given by $\sigma^2_k$ and $\Sigma_{1:p}$ has rank $s_p$, then $\|\Sigma_{1:p}\|_{\textup{op}} \geq s_p^{-1}\sum_{k=1}^p \sigma^2_k$, because the maximum eigenvalue of $\Sigma_{1:p}$ must be larger than the average of its non-zero eigenvalues.
Thus, if $s_p = s$ is constant in $p$ and the component-wise variances are bounded away from 0 and $\infty$, the operator norm is $O(p)$.
In the special case where $s = 1$, the covariates are perfectly correlated.
On the other hand, if $s_p$ is a non-decreasing function of $p$, then the operator norm grows as $O(p/s_p)$.
\end{example}

Each example shows that if the covariates $X_{1:p}$ are not too correlated, so that $\|\Sigma_{1:p}\|_{\textup{op}} = o(p)$, strict overlap implies that the mean absolute discrepancy in \eqref{eq:general mean MAD} converges to zero, and the covariate means approach balance, on average, as $p$ grows large.

\clearpage
\singlespacing
\bibliographystyle{biometrika}
\bibliography{overlap}

\begin{thebibliography}{52}
\expandafter\ifx\csname natexlab\endcsname\relax\def\natexlab#1{#1}\fi

\bibitem[{Ali \& Silvey(1966)}]{Ali1966}
\textsc{Ali, S.~M.} \& \textsc{Silvey, S.~D.} (1966).
\newblock {A General Class of Coefficients of Divergence of One Distribution
  from Another}.
\newblock \textit{Journal of the Royal Statistical Society. Series B
  (Methodological)} \textbf{28}, 131--142.

\bibitem[{Andrews \& Cheng(2012)}]{andrews2012estimation}
\textsc{Andrews, D.~W.} \& \textsc{Cheng, X.} (2012).
\newblock Estimation and inference with weak, semi-strong, and strong
  identification.
\newblock \textit{Econometrica} \textbf{80}, 2153--2211.

\bibitem[{Andrews \& Cheng(2013)}]{andrews2013maximum}
\textsc{Andrews, D.~W.} \& \textsc{Cheng, X.} (2013).
\newblock Maximum likelihood estimation and uniform inference with sporadic
  identification failure.
\newblock \textit{Journal of Econometrics} \textbf{173}, 36--56.

\bibitem[{Armstrong \& Koles{\'a}r(2018)}]{armstrong2017finite}
\textsc{Armstrong, T.~B.} \& \textsc{Koles{\'a}r, M.} (2018).
\newblock Finite-sample optimal estimation and inference on average treatment
  effects under unconfoundedness.
\newblock \textit{arXiv preprint arXiv:1712.04594} .

\bibitem[{Athey \& Imbens(2019)}]{athey2019mlreview}
\textsc{Athey, S.} \& \textsc{Imbens, G.} (2019).
\newblock Machine learning methods economists should know about.
\newblock \textit{Annual Review of Economics} \textbf{11}, 685--725.

\bibitem[{Athey et~al.(2018)Athey, Imbens \& Wager}]{Athey2016}
\textsc{Athey, S.}, \textsc{Imbens, G.~W.} \& \textsc{Wager, S.} (2018).
\newblock {Approximate Residual Balancing: De-Biased Inference of Average
  Treatment Effects in High Dimensions}.
\newblock \textit{Journal of the Royal Statistical Society: Series B}
  \textbf{80}, 597--623.

\bibitem[{Belloni et~al.(2013)Belloni, Chernozhukov \& Hansen}]{Belloni2013}
\textsc{Belloni, A.}, \textsc{Chernozhukov, V.} \& \textsc{Hansen, C.} (2013).
\newblock {Inference on treatment effects after selection among
  high-dimensional controls}.
\newblock \textit{Review of Economic Studies} \textbf{81}, 608--650.

\bibitem[{Belloni et~al.(2014)Belloni, Chernozhukov \& Hansen}]{Belloni2014}
\textsc{Belloni, A.}, \textsc{Chernozhukov, V.} \& \textsc{Hansen, C.} (2014).
\newblock {High-Dimensional Methods and Inference on Structural and Treatment
  Effects}.
\newblock \textit{Journal of Economic Perspectives} \textbf{28}, 29--50.

\bibitem[{Bickel \& Levina(2004)}]{Bickel2004}
\textsc{Bickel, P.~J.} \& \textsc{Levina, E.} (2004).
\newblock {Some theory for Fisher's linear discriminant function, `naive
  Bayes', and some alternatives when there are many more variables than
  observations}.
\newblock \textit{Bernoulli} \textbf{10}, 989--1010.

\bibitem[{Busso et~al.(2014)Busso, DiNardo \& McCrary}]{busso2014new}
\textsc{Busso, M.}, \textsc{DiNardo, J.} \& \textsc{McCrary, J.} (2014).
\newblock New evidence on the finite sample properties of propensity score
  reweighting and matching estimators.
\newblock \textit{Review of Economics and Statistics} \textbf{96}, 885--897.

\bibitem[{Chaudhuri \& Hill(2014)}]{chaudhuri2014heavy}
\textsc{Chaudhuri, S.} \& \textsc{Hill, J.~B.} (2014).
\newblock Heavy tail robust estimation and inference for average treatment
  effects.
\newblock Tech. rep., Working Paper.

\bibitem[{Chen et~al.(2008)Chen, Hong \& Tarozzi}]{chen2008semiparametric}
\textsc{Chen, X.}, \textsc{Hong, H.} \& \textsc{Tarozzi, A.} (2008).
\newblock Semiparametric efficiency in gmm models with auxiliary data.
\newblock \textit{The Annals of Statistics} \textbf{36}, 808--843.

\bibitem[{Chen et~al.(2011)Chen, Ponomareva \& Tamer}]{chenlikelihood2011}
\textsc{Chen, X.}, \textsc{Ponomareva, M.} \& \textsc{Tamer, E.} (2011).
\newblock Likelihood inference in finite mixture models with applications to
  experimental data.
\newblock \textit{Cowles Working Paper} .

\bibitem[{Chernozhukov et~al.(2019)Chernozhukov, Chetverikov, Demirer, Duflo,
  Hansen, Newey \& Robins}]{Chernozhukov2016}
\textsc{Chernozhukov, V.}, \textsc{Chetverikov, D.}, \textsc{Demirer, M.},
  \textsc{Duflo, E.}, \textsc{Hansen, C.}, \textsc{Newey, W.} \&
  \textsc{Robins, J.} (2019).
\newblock {Double/Debiased Machine Learning for Treatment and Structural
  Parameters}.
\newblock \textit{Econometrics Journal} \textbf{21}, C1--C68.

\bibitem[{Cover \& Thomas(2005)}]{Cover2005}
\textsc{Cover, T.~M.} \& \textsc{Thomas, J.~A.} (2005).
\newblock {Entropy, Relative Entropy, and Mutual Information}.
\newblock In \textit{Elements of Information Theory}, no.~x, chap.~2. Hoboken,
  NJ, USA: John Wiley {\&} Sons, Inc., pp. 13--55.

\bibitem[{Crump et~al.(2006)Crump, Hotz, Imbens \& Mitnik}]{crump2006moving}
\textsc{Crump, R.}, \textsc{Hotz, V.~J.}, \textsc{Imbens, G.} \&
  \textsc{Mitnik, O.} (2006).
\newblock Moving the goalposts: Addressing limited overlap in the estimation of
  average treatment effects by changing the estimand.
\newblock National Bureau of Economic Research Cambridge, Mass., USA.

\bibitem[{Crump et~al.(2009)Crump, Hotz, Imbens \& Mitnik}]{Crump2009}
\textsc{Crump, R.~K.}, \textsc{Hotz, V.~J.}, \textsc{Imbens, G.~W.} \&
  \textsc{Mitnik, O.~A.} (2009).
\newblock {Dealing with limited overlap in estimation of average treatment
  effects}.
\newblock \textit{Biometrika} \textbf{96}, 187--199.

\bibitem[{Csisz{\'{a}}r(1963)}]{Csis63}
\textsc{Csisz{\'{a}}r, I.} (1963).
\newblock {Eine informationstheoretische Ungleichung und ihre anwendung auf den
  Beweis der ergodizit{\"{a}}t von Markoffschen Ketten}.
\newblock \textit{Publ. Math. Inst. Hungar. Acad.} \textbf{8}, 95--108.

\bibitem[{D'Amour et~al.(2019)D'Amour, Ding, Feller, Lei \&
  Sekhon}]{overlap_testing}
\textsc{D'Amour, A.}, \textsc{Ding, P.}, \textsc{Feller, A.}, \textsc{Lei, L.}
  \& \textsc{Sekhon, J.} (2019).
\newblock Model-free statistical assessment of population overlap and trimming.
\newblock Tech. rep.

\bibitem[{Dehejia \& Wahba(1999)}]{dehejia1999causal}
\textsc{Dehejia, R.~H.} \& \textsc{Wahba, S.} (1999).
\newblock Causal effects in nonexperimental studies: Reevaluating the
  evaluation of training programs.
\newblock \textit{Journal of the American Statistical Association} \textbf{94},
  1053--1062.

\bibitem[{Devroye et~al.(1996)Devroye, Gy{\"{o}}rfi \& Lugosi}]{Devroye1996}
\textsc{Devroye, L.}, \textsc{Gy{\"{o}}rfi, L.} \& \textsc{Lugosi, G.} (1996).
\newblock {The Bayes Error}.
\newblock vol.~31 of \textit{Stochastic Modelling and Applied Probability}. New
  York, NY: Springer New York, pp. 9--20.

\bibitem[{Ding et~al.(2017)Ding, Vanderweele \& Robins}]{Ding2017}
\textsc{Ding, P.}, \textsc{Vanderweele, T.~J.} \& \textsc{Robins, J.~M.}
  (2017).
\newblock {Instrumental variables as bias amplifiers with general outcome and
  confounding}.
\newblock \textit{Biometrika} \textbf{104}, 291--302.

\bibitem[{Farrell(2015)}]{Farrell2015}
\textsc{Farrell, M.~H.} (2015).
\newblock {Robust inference on average treatment effects with possibly more
  covariates than observations}.
\newblock \textit{Journal of Econometrics} \textbf{189}, 1--23.

\bibitem[{Hansen(2008)}]{Hansen2008}
\textsc{Hansen, B.~B.} (2008).
\newblock {The prognostic analogue of the propensity score}.
\newblock \textit{Biometrika} \textbf{95}, 481--488.

\bibitem[{Hellman \& Cover(1970)}]{Hellman1970}
\textsc{Hellman, M.~E.} \& \textsc{Cover, T.~M.} (1970).
\newblock {Learning with Finite Memory}.
\newblock \textit{The Annals of Mathematical Statistics} \textbf{41}, 765--782.

\bibitem[{Hirano et~al.(2003)Hirano, Imbens \& Ridder}]{hirano2003efficient}
\textsc{Hirano, K.}, \textsc{Imbens, G.~W.} \& \textsc{Ridder, G.} (2003).
\newblock Efficient estimation of average treatment effects using the estimated
  propensity score.
\newblock \textit{Econometrica} \textbf{71}, 1161--1189.

\bibitem[{Hirshberg \& Wager(2017)}]{hirshberg2017augmented}
\textsc{Hirshberg, D.~A.} \& \textsc{Wager, S.} (2017).
\newblock Augmented minimax linear estimation.
\newblock \textit{arXiv preprint arXiv:1712.00038} .

\bibitem[{Hong et~al.(2018)Hong, Leung \& Li}]{hong2018inference}
\textsc{Hong, H.}, \textsc{Leung, M.~P.} \& \textsc{Li, J.} (2018).
\newblock Inference on finite population treatment effects under limited
  overlap.
\newblock \textit{Available at SSRN 3128546} .

\bibitem[{Imbens(2004)}]{imbens2004nonparametric}
\textsc{Imbens, G.~W.} (2004).
\newblock Nonparametric estimation of average treatment effects under
  exogeneity: A review.
\newblock \textit{Review of Economics and Statistics} \textbf{86}, 4--29.

\bibitem[{Khan \& Nekipelov(2013)}]{khan2013uniform}
\textsc{Khan, S.} \& \textsc{Nekipelov, D.} (2013).
\newblock On uniform inference in nonlinear models with endogeneity.
\newblock \textit{Economic Research Initiatives at Duke (ERID) Working Paper} .

\bibitem[{Khan \& Tamer(2010)}]{Khan2010}
\textsc{Khan, S.} \& \textsc{Tamer, E.} (2010).
\newblock {Irregular Identification, Support Conditions, and Inverse Weight
  Estimation}.
\newblock \textit{Econometrica} \textbf{78}, 2021--2042.

\bibitem[{Li et~al.(2018)Li, Morgan \& Zaslavsky}]{li2018balancing}
\textsc{Li, F.}, \textsc{Morgan, K.~L.} \& \textsc{Zaslavsky, A.~M.} (2018).
\newblock Balancing covariates via propensity score weighting.
\newblock \textit{Journal of the American Statistical Association}
  \textbf{113}, 390--400.

\bibitem[{Liese \& Vajda(2006)}]{Liese2006}
\textsc{Liese, F.} \& \textsc{Vajda, I.} (2006).
\newblock {On divergences and informations in statistics and information
  theory}.
\newblock \textit{IEEE Transactions on Information Theory} \textbf{52},
  4394--4412.

\bibitem[{Luo et~al.(2017)Luo, Zhu \& Ghosh}]{Luo2017}
\textsc{Luo, W.}, \textsc{Zhu, Y.} \& \textsc{Ghosh, D.} (2017).
\newblock {On estimating regression-based causal effects using sufficient
  dimension reduction}.
\newblock \textit{Biometrika} \textbf{104}, 51--65.

\bibitem[{Ma \& Wang(2019)}]{ma2018robust}
\textsc{Ma, X.} \& \textsc{Wang, J.} (2019).
\newblock Robust inference using inverse probability weighting.
\newblock \textit{Journal of the American Statistical Association} , in press.

\bibitem[{Myers et~al.(2011)Myers, Rassen, Gagne, Huybrechts, Schneeweiss,
  Rothman, Joffe \& Glynn}]{Myers2011}
\textsc{Myers, J.~A.}, \textsc{Rassen, J.~A.}, \textsc{Gagne, J.~J.},
  \textsc{Huybrechts, K.~F.}, \textsc{Schneeweiss, S.}, \textsc{Rothman,
  K.~J.}, \textsc{Joffe, M.~M.} \& \textsc{Glynn, R.~J.} (2011).
\newblock {Effects of adjusting for instrumental variables on bias and
  precision of effect estimates}.
\newblock \textit{American Journal of Epidemiology} \textbf{174}, 1213--1222.

\bibitem[{Pearl(2010)}]{Pearl2010}
\textsc{Pearl, J.} (2010).
\newblock {On a Class of Bias-Amplifying Variables that Endanger Effect
  Estimates}.
\newblock \textit{Proceedings of the Twenty-Sixth Conference on Uncertainty in
  Artificial Intelligence (UAI2010)} , 425--432.

\bibitem[{Pearl(2011)}]{Pearl2011}
\textsc{Pearl, J.} (2011).
\newblock {Invited commentary: Understanding bias amplification}.
\newblock \textit{American Journal of Epidemiology} \textbf{174}, 1223--1227.

\bibitem[{Petersen et~al.(2012)Petersen, Porter, Gruber, Wang \& van~der
  Laan}]{Petersen2012}
\textsc{Petersen, M.~L.}, \textsc{Porter, K.~E.}, \textsc{Gruber, S.},
  \textsc{Wang, Y.} \& \textsc{van~der Laan, M.~J.} (2012).
\newblock {Diagnosing and responding to violations in the positivity
  assumption}.
\newblock \textit{Statistical Methods in Medical Research} \textbf{21}, 31--54.

\bibitem[{Romano \& Wolf(1999)}]{romano1999subsampling}
\textsc{Romano, J.~P.} \& \textsc{Wolf, M.} (1999).
\newblock Subsampling inference for the mean in the heavy-tailed case.
\newblock \textit{Metrika} \textbf{50}, 55--69.

\bibitem[{Rosenbaum \& Rubin(1983)}]{Rosenbaum1983}
\textsc{Rosenbaum, P.} \& \textsc{Rubin, D.} (1983).
\newblock {The central role of the propensity score in observational studies
  for causal effects}.
\newblock \textit{Biometrika} \textbf{70}, 41--55.

\bibitem[{Rosenbaum(2002)}]{Rosenbaum2002}
\textsc{Rosenbaum, P.~R.} (2002).
\newblock \textit{{Observational Studies}}.
\newblock New York, NY: Springer.

\bibitem[{Rothe(2017)}]{rothe2017robust}
\textsc{Rothe, C.} (2017).
\newblock Robust confidence intervals for average treatment effects under
  limited overlap.
\newblock \textit{Econometrica} \textbf{85}, 645--660.

\bibitem[{Rubin(2009)}]{Rubin2009}
\textsc{Rubin, D.~B.} (2009).
\newblock {Should observational studies be designed to allow lack of balance in
  covariate distributions across treatment groups?}
\newblock \textit{Statistics in Medicine} \textbf{28}, 1420--1423.

\bibitem[{Rukhin(1993)}]{Rukhin1993}
\textsc{Rukhin, A.~L.} (1993).
\newblock {Lower Bound on the Error Probability for Families with Bounded
  Likelihood Ratios}.
\newblock \textit{Proceedings of the American Mathematical Society}
  \textbf{119}, 1307.

\bibitem[{Rukhin(1997)}]{Rukhin1997}
\textsc{Rukhin, A.~L.} (1997).
\newblock {Information-type divergence when the likelihood ratios are bounded}.
\newblock \textit{Applicationes Mathematicae} \textbf{24}, 415--423.

\bibitem[{Sasaki \& Ura(2018)}]{sasaki2017inference}
\textsc{Sasaki, Y.} \& \textsc{Ura, T.} (2018).
\newblock Inference for moments of ratios with robustness against large
  trimming bias and unknown convergence rate.
\newblock \textit{arXiv preprint arXiv:1709.00981} .

\bibitem[{Vajda(1973)}]{Vajda1973}
\textsc{Vajda, I.} (1973).
\newblock {$\chi^\alpha$-divergence and generalized Fisher's information}.
\newblock \textit{Transactions of the Sixth Prague Conference on Information
  Theory, Statistical Decision Functions, Random Processes} , 873--886.

\bibitem[{van~der Laan \& Gruber(2010)}]{VanderLaan2010}
\textsc{van~der Laan, M.~J.} \& \textsc{Gruber, S.} (2010).
\newblock {Collaborative Double Robust Targeted Maximum Likelihood Estimation}.
\newblock \textit{The International Journal of Biostatistics} \textbf{6},
  Article 17.

\bibitem[{van~der Laan \& Rose(2011)}]{VanderLaan2011}
\textsc{van~der Laan, M.~J.} \& \textsc{Rose, S.} (2011).
\newblock \textit{{Targeted Learning}}.
\newblock New York, NY: Springer.

\bibitem[{Wooldridge(2016)}]{wooldridge2016zbias}
\textsc{Wooldridge, J.~M.} (2016).
\newblock Should instrumental variables be used as matching variables?
\newblock \textit{Research in Economics} \textbf{70}, 232--237.

\bibitem[{Yang \& Ding(2018)}]{yang2018asymptotic}
\textsc{Yang, S.} \& \textsc{Ding, P.} (2018).
\newblock Asymptotic causal inference with observational studies trimmed by the
  estimated propensity scores.
\newblock \textit{Biometrika} \textbf{105}, 487--493.

\end{thebibliography}

\end{document}